\title{Poisson--Dirichlet Limit Theorems in Combinatorial Applications via Multi-Intensities}
\author{Richard Arratia  \\ Fred Kochman}
\date{November 2012}
\def\e{\mathbb{E} \,}
\newcommand{\ignore}[1]{}
\newtheorem{theorem}{Theorem}
\newtheorem{proposition}{Proposition}
\newtheorem{lemma}{Lemma}
\begin{document}

\maketitle

\begin{abstract}
We present new, exceptionally efficient
proofs of Poisson--Dirichlet limit theorems for the scaled sizes of irreducible components of 
random elements in the classic combinatorial contexts of arbitrary
assemblies, multisets, and selections, when the components generating functions
satisfy certain standard hypotheses. The proofs exploit a new criterion for
Poisson--Dirichlet limits, originally designed for rapid proofs of Billingsley's
theorem on the scaled sizes of log prime factors of random integers (and some 
new generalizations).  

 Unexpectedly, the technique applies in the present combinatorial 
setting as well, giving, perhaps, a long
sought-after unifying point of view.  The proofs depend also 
on formulas of Arratia and Tavar{\'e} for the mixed moments of counts of components of various 
sizes, as well as formulas of Flajolet and Soria for the asymptotics of generating function 
coefficients.
\end{abstract}

\section{Introduction}

\subsection{Summary}
 The goal of this paper is to provide new and exceptionally
efficient proofs of very general Poisson--Dirichlet limit theorems for
the scaled sizes of components of random elements in the classic
combinatorial contexts of assemblies, multisets, and selections, when
the components generating functions satisfy certain standard
hypotheses. The proofs depend on a fairly new characterization of
convergence in distribution to a Poisson--Dirichlet process,
originally designed to yield a rapid proof of Billingsley's 1972
theorem on the asymptotic scaled sizes of log prime factors of random
integers. That work, including new generalizations of Billingsley's
result to factorizations in wide classes of normed arithmetic
semigroups, was presented in \cite{AKM}.

 Poisson--Dirichlet limit theorems are also available for the
asymptotic scaled sizes of irreducible components of various random
combinatorial objects. The earliest such result was that of Kingman
\cite{King3} and Vershik and Schmidt \cite{VS}, 
applying to irreducible cycles of random permutations
distributed uniformly or, more generally, according to the Ewens
sampling formula.  Analogous results for other random combinatorial
objects were eventually discovered, and by the early 1990's quite
general theorems applying uniformly to members of quite general
families were known. The first such, due to Jennie Hansen \cite{JH},
exploited generating function structure. Subsequent versions, due to
Arratia, Barbour and Tavar{\'e}~\cite{ABT}, invoked significantly weaker
hypotheses and used much different techniques in combinatorial
stochastic processes. Further generalizations continue to be
published.

 Unexpectedly, the techniques of \cite{AKM} were found to apply to the combinatorial regime 
as well, in the presence of generating functions, giving new proofs of results going beyond those 
of \cite{JH} though not as general as those of 
\cite{ABT}; but all the classical cases are included, and the new proofs are extremely rapid.
The commonality of technique may be viewed as providing a unifying framework for Billingsley's
theorem and the combinatorial limit theorems, one which has long been sought. 
(See e.g. the unpublished \cite{King2} by J.F.C. Kingman.\footnote{Kingman proposes one possible 
unifying vantage point if natural density is replaced by harmonic density, in Billingsley's
theorem, but he is apparently dissatisfied with this because recovering the original theorem 
then seems to require the intervention of quite nontrivial auxiliary results.})

\subsection{History, Definitions, Notation}

 The origins of these limit theorems lie in the following earlier results.
Let $p_1 \ge p_2 \ge \cdots$ be the prime factors of a random integer chosen uniformly
from $[1..n]$, and let 
$$
L_j:= \log p_j /\log n.
$$
Or, let $l_1 \ge l_2 \ge \cdots$ be the cycle lengths of a uniform random permutation of length $n$,
and let
$$
L_j:= l_j /n.
$$

 In either case we have
$$
\lim_{n\to \infty} \Pr(L_1 \le t) = \rho(1/t)
$$
where $\rho(\cdot)$ is Dickman's $\rho$, the unique continuous function on $[0,\infty]$
satisfying
$$
\rho(t) = 1 \mbox{ for } 0 \le t \le 1
$$
and
\begin{equation}\label{rhorecur}
t\rho(t) = \int_{t-1}^t \rho(u) du \mbox{ for } 1 \le t <\infty,
\end{equation}
as shown for random integers by Dickman \cite{Dick30} in 1930 and for random permutations by 
Goncharov \cite{Gon} in 1944.

 Nowadays these can be viewed as respective corollaries of a pair of 
later results giving the limiting distributions
of the the entire joint processes $L_1,L_2,\dots$, in the two cases. To state these results we
first define the Poisson--Dirichlet distribution:

 Let $U_1,U_2,\dots$ be iid uniform on $[0,1]$, and define a process $G_1,G_2,\dots$, also 
with values lying in $[0,1]$, via
$$
G_1 = 1-U_1, G_2 = U_1(1-U_2), G_3 = U_1U_2(1-U_3),\dots.
$$
Then the Poisson--Dirichlet process (PD for short) $X_1 \ge X_2\ge \cdots$ is the outcome of sorting
$G_1,G_2,\dots$ into non-increasing order, i.e. 
$$
(X_1\ge X_2 \ge \cdots) = {\bf{SORT}}(G_1,G_2,G_3\dots).
$$
It follows at once from the definition of the $G_i$'s that $X_1 + X_2 + \cdots =1$ almost surely,
and it can be shown that for each $k>0$, $X_1,\dots,X_k$ have the marginal distribution with
joint density function  
$$
f_k(x_1,x_2,\dots,x_k) = 
\frac{1}{x_1\cdots x_k} \rho\left(\frac{1-x_1-\cdots-x_k}{x_k}\right)
$$
on $\{1 \ge x_1 \ge x_2 \ge \cdots \ge 0\}\cap\{x_1 + \cdots + x_k \le 1\}$. (In particular, for 
$k=1$ it follows from this, together with \eqref{rhorecur}, that
$\Pr( X_1 \le t) = \rho(1/t)$.) This explicit distribution function provides an alternative
characterization of PD. There are a number of other characterizations, though we will
not need them here. 

 More generally, for any real parameter $\theta >0$ the Poisson--Dirichlet$(\theta)$ process 
(PD($\theta)$) is defined by replacing each $U_i$ with $U_i^{1/\theta}$ in the definition above. 
(So in particular, PD(1) is just PD.)  Then 
the density functions $f_k$ are replaced by 
$$
f_{\theta,k}(x_1,\dots,x_k) = 
\frac{e^{\gamma\theta}\theta^k \Gamma(\theta) x_k^{\theta-1}}{x_1\cdots x_k} 
g_{\theta}\left(\frac{1-x_1-\cdots-x_k}{x_k}\right)
$$
where $g_{\theta}$ is the unique continuous function on $(0,\infty)$ satisfying
$$
g_{\theta}(t) = \frac{e^{-\gamma \theta} t^{\theta-1}}{\Gamma(\theta)}  \mbox{ for } 0 < t \le 1
$$
and
$$
tg_{\theta}(t) = \theta \int^t_{t-1}g_{\theta}(u)du  \mbox{ for } 1 \le t.
$$
Again there are alternative characterizations which might be more convenient in other contexts; 
see \cite{ABT} for full details.

 We can now state the two original Poisson--Dirichlet limit theorems:
\begin{theorem}[Billingsley, 1972]\label{classicBill}
 Let $p_1 \ge p_2 \ge \cdots $ be the prime factors of a uniform random integer $N \in [1..n]$,
and define $L_{jn} = \log p_j / \log n$, where the latter sequence is padded out with zeros. 
Then for each $k>0$, as $n \to \infty$ the joint distribution of $L_{1n},\dots,L_{kn}$
converges  weakly to the initial $k$-dimensional joint PD(1) distribution.
\end{theorem}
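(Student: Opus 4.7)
The plan is to apply the multi-intensity characterization of Poisson--Dirichlet convergence developed in \cite{AKM}. In that framework, one works with the \emph{unordered} point process of scaled log prime factors $\{L_{jn}\}_{j} \subset (0, 1]$ and verifies that its $k$-point correlation functions (the densities of the factorial-moment measures) converge on the open simplex $\Delta_k := \{(x_1, \ldots, x_k) : x_j > 0, \, \sum_j x_j < 1\}$ to the PD(1) multi-intensity
$$
\frac{1}{x_1 x_2 \cdots x_k}.
$$
The criterion then delivers weak convergence of the sorted top $k$ coordinates to the initial $k$-dimensional PD(1) distribution.

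Computing the $k$-point correlation in the random integer model is transparent: for distinct primes $p_1, \ldots, p_k$, the probability that every $p_j$ divides a uniform $N \in [1, n]$ equals $\lfloor n / (p_1 \cdots p_k) \rfloor / n = 1/(p_1 \cdots p_k) + O(1/n)$, valid so long as $p_1 \cdots p_k \le n$. Summing over primes $p_j$ whose scaled logarithms fall in a fixed box $B = \prod_j [a_j, b_j] \subset \Delta_k$, and applying Mertens' theorem in the rescaled form
$$
\sum_{n^a \le p \le n^b} \frac{1}{p} = \log(b/a) + o(1),
$$
one obtains
$$
\sum_{\substack{p_1, \ldots, p_k \text{ distinct} \\ \log p_j / \log n \in [a_j, b_j]}} \frac{\lfloor n / (p_1 \cdots p_k) \rfloor}{n} \longrightarrow \int_B \frac{dx_1 \cdots dx_k}{x_1 \cdots x_k},
$$
which matches the PD(1) multi-intensity on $B$. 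Diagonal terms from coinciding primes contribute $O(1/\log n)$ and are negligible.

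The main obstacle is control at the boundary of the simplex: one must verify that $k$-tuples of primes whose product approaches or exceeds $n$ contribute negligibly on compact subsets of $\Delta_k$. This is handled by the crude bound $\lfloor n/m \rfloor / n \le 1/m$ for $m \le n$ (and vanishing for $m > n$) combined with partial summation against Mertens' estimates, rendering the tail contribution from $p_1 \cdots p_k > n$ uniformly small. Once multi-intensity convergence is established on every compact subset of the open simplex, the AKM criterion applies immediately and Theorem~\ref{classicBill} follows --- notably, without any direct invocation of Dickman's function. The Dickman factor appearing in the joint density of the sorted top $k$ coordinates emerges automatically as an intrinsic feature of PD(1) itself, rather than from any explicit analysis of smooth-number counts.
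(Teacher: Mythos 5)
Your overall strategy --- apply the multi-intensity criterion from \cite{AKM} with a Mertens-type estimate --- is the same as the paper's, but you have misremembered what the criterion actually asks for, and this leads you to invent difficulties that do not exist while silently skipping the one hypothesis that does matter. Proposition~\ref{PDonly} requires only two things: (i) $T_n\le 1$ almost surely, and (ii) a \emph{one-sided} $\liminf$ lower bound
$\liminf_n \e\prod_i |A_n\cap I_i|\ \ge\ \prod_i\bigl(\log b_i-\log a_i\bigr)$
for \emph{disjoint} closed intervals $I_i=[a_i,b_i]\subset(0,1]$ with $b_1+\cdots+b_k<1$. You instead set out to prove two-sided \emph{convergence} of $k$-point correlations on the whole open simplex. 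Within the actual hypotheses there is nothing to ``control at the boundary'': since $\sum b_i<1$ and $p_i\le n^{b_i}$, every product $p_1\cdots p_k\le n^{\sum b_i}<n$ automatically, so the tail $p_1\cdots p_k>n$ never occurs. Likewise there are no diagonal terms to dismiss, because disjoint intervals force the primes to be distinct. Both of the ``main obstacles'' you spend effort on are artifacts of proving a stronger statement than the criterion requires.

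Conversely, you never verify $T_n\le1$, which is essential for the criterion: here it is immediate from $\sum_j\log p_j=\log N\le\log n$, but it must be said. You also do not acknowledge that $A_n$ is a \emph{multiset} (a prime $p$ with $p^2\mid N$ contributes twice to $|A_n\cap I|$), whereas your indicator computation $\Pr(p\mid N)\approx 1/p$ only gives the count \emph{without} multiplicity. For the $\liminf$ lower bound this costs nothing --- dropping multiplicities can only decrease the count, which is exactly how the paper handles it by passing to $A_n^1$; but in the two-sided convergence you set up for yourself, you would owe an extra (easy) estimate showing the higher-prime-power contribution is $o(1)$, and you do not supply it. In short, the computation you carry out (Mertens plus $\lfloor n/m\rfloor/n=1/m+O(1/n)$) is exactly the right one, but you should be invoking the lower-bound form of the criterion, stating $T_n\le1$ explicitly, and noting that passing to the underlying set of distinct primes only helps; then all of your worries about tails, diagonals, and Dickman disappear, and the proof becomes the five-line calculation in the paper.
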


\begin{theorem}[Kingman, 1977; Vershik and Schmidt, 1977]\label{classicKing}
 Let $l_1 \ge l_2 \ge \cdots$ be the cycle lengths of a uniform random $n$-long permutation,
and define $L_{jn} = l_j /n$, where the latter sequence is padded out with zeros. Then 
for each $k>0$, as $n \to \infty$ the joint distribution of $L_{1n},\dots,L_{kn}$
converges  weakly to the initial $k$-dimensional joint PD(1) distribution.
\end{theorem}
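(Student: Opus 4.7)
The plan is to apply the multi-intensity convergence criterion for Poisson--Dirichlet limits from \cite{AKM}, which reduces the joint weak convergence in the statement to checking (a)~convergence, on compact subsets of $\{x_i > 0,\ \sum_i x_i < 1\}$, of the $k$-th factorial moment densities (``multi-intensities'') of the scaled cycle-length point process $\mu_n := \sum_j \delta_{l_j/n}$ to the corresponding multi-intensity of PD(1), namely $\prod_{i=1}^k x_i^{-1}$; together with (b)~a conservation-of-mass condition, which in this setting is the triviality that $\sum_j l_j/n = 1$ almost surely.

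For (a), the essential input is the classical formula, which we invoke in the form emphasized by Arratia and Tavar\'e: if $C_m^{(n)}$ denotes the number of $m$-cycles in a uniform random permutation of $[n]$, then for any distinct positive integers $m_1, \ldots, m_k$,
\begin{equation*}
\E \prod_{i=1}^k C_{m_i}^{(n)} \;=\; \prod_{i=1}^k \frac{1}{m_i} \qquad \text{when } \sum_i m_i \le n,
\end{equation*}
and $0$ otherwise. In the uniform-permutation case this identity follows directly from the cycle index of $S_n$, associated with the EGF $1/(1-z)=\exp(-\log(1-z))$, so that, unlike in the assembly/multiset/selection regimes treated elsewhere in the paper, no Flajolet--Soria asymptotics are needed. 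Writing $x_i = m_i/n$ and noting that the lattice spacing is $1/n$, the $k$-th factorial moment measure of $\mu_n$ is then the discrete measure assigning mass $n^{-k}\prod_i x_i^{-1}$ at each admissible lattice $k$-tuple with distinct coordinates summing to at most $1$; as $n \to \infty$ the induced Riemann sums converge, uniformly on compact subsets of the open region $\{x_i>0,\ \sum x_i<1\}$, to $\int \prod_i x_i^{-1}\, dx_1\cdots dx_k$, which is exactly the PD(1) $k$-th multi-intensity.

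The main obstacle I anticipate lies not in the above computation, which is essentially immediate, but in the supporting bookkeeping behind the criterion itself: one must check that convergence of multi-intensities only on compacta of $\{x_i>0\}$, combined with the mass condition, really does force weak convergence of the ranked atoms to PD(1). That verification is the substantive content of the criterion established in \cite{AKM}; once it is in hand, the obstacle disappears essentially for free in the permutation case because the total mass is deterministically $1$, whereas in Billingsley's setting, or in the less rigid combinatorial settings of the paper, one must separately handle the fluctuation of $\sum_j L_{jn}$ toward $1$ and the contribution of the many short components clustering near zero.
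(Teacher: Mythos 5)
Your proposal is correct and follows essentially the same route as the paper: both invoke the multi-intensity criterion from \cite{AKM} (here Proposition~\ref{PDonly}, together with the trivial mass constraint $T_n\le 1$), plug in the exact identity $\E\prod_i C_{m_i}=\prod_i m_i^{-1}$ for $\sum m_i\le n$, and let the resulting harmonic/Riemann sums over a disjoint family of test intervals converge to $\prod_i(\log b_i-\log a_i)$, the integral of the PD(1) multi-intensity $\prod_i x_i^{-1}$. The only cosmetic difference is that the paper states the criterion as a one-sided $\liminf$ bound over finite products of disjoint closed intervals rather than as convergence of factorial-moment densities on compacta, but in the permutation case you have exact equality so the two formulations coincide.
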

 (Since, as noted, $F(t) = \rho(1/t)$ is the cumulative distribution function of the initial 
PD variable, the earlier results of Dickman and Goncharev are indeed respective corollaries 
of the two theorems just stated.)

 Billingsley proved his result before PD was a named and studied distribution, deriving his
limiting $k$-dimensional distributions in the form of certain series expansions not obviously
involving Dickman's $\rho$, of which he appears to have been unaware. 
 In turn, neither Kingman, who had
both named and made a study of PD($\theta$) in \cite{King3}, nor Vershik and Schmidt,
makes any mention of Billingsley's
theorem

 At any rate, as remarked in \cite{King2} it was not until 1984 that a 
publication \cite{Ll} noted that the  
two limiting distributions were identical, which immediately raised the question of why 
this should be so.
Over the years, as already mentioned, analogues of the permutation result were proved for other
random decomposable objects, with various unified methods of proof.  Billingsley's theorem,
on the other hand, remained an isolated result in probabilistic number theory until very
recently (see \cite{AKM}); and although 
several different proofs have appeared, the methods have seemed different from those used 
for the combinatorial results, and the coincidence of limiting distributions has been felt to
lack adequate explanation.

 In the next section, however, we will use the recent criterion, from \cite{AKM}, 
for convergence to
PD to give very brief, self-contained proofs of both theorems above by means of a common 
method; and then we will go on to use the companion characterization of convergence to 
the more general
PD($\theta$), for the promised combinatorial applications, in Section~\ref{gencomb}.
 
\section{Characterization via multi-intensities}\label{charsec}
 The following characterizations of convergence to PD and to PD($\theta$) were originally 
presented in \cite{AKM}.  In what follows, all (random) multisets are (almost surely)
at most countable, with only finite multiplicities.

 Given a sequence $A_n$ of random multisubsets of $(0,1]$, let
  $T_n$ denote the sum of the elements of $A_n$, counting
  multiplicities;
and for any multiset $A$ and set $S$ in $(0,1]$ let $|A \cap S|$ denote the cardinality of the 
intersection, also counting multiplicities. 
Also, let $L(n)=(L_1(n),L_2(n),\dots)$, where $L_i(n) := $ the $i^{\rm th}$
largest
element of $A_n$ if $i \le |A_n|$, and   $L_i(n) := 0$ if $i >
|A_n|$. (Our hypotheses will ensure that, almost surely, 
no $A_n$ possesses a positive accumulation point, 
ensuring in turn that the elements can actually be placed in a non-increasing sequence.)

 Here, first, is the PD-only version:
\begin{proposition}\label{PDonly}
 Suppose that
$T_n \le 1$ almost surely, for all $n$, and that for any collection of
disjoint closed intervals $I_i = [a_i,b_i] \subset (0,1], i =
  1,\dots,k$ satisfying $b_1 + \cdots + b_k <1$, for any $k \ge 1$, 
we have 
\begin{equation}\label{simple}
\liminf \e |A_n \cap I_1|\cdots |A_n \cap I_k| \ge 
\prod_{i=1}^k  (\log(b_i) - \log(a_i))
\end{equation}
as $n \to \infty.$
Then $L(n)$ converges in distribution to $(L_1,L_2,\dots)$, the Poisson--Dirichlet distribution
with parameter 1.
\end{proposition}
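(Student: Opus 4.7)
The plan is to extract a subsequential distributional limit $L = (L_1, L_2, \dots)$ of $L(n)$ in the product topology and show that $L$ must have the PD law. Tightness is automatic because each coordinate lives in $[0,1]$, so it suffices to identify the distribution of an arbitrary subsequential limit.

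First, I would transfer the hypothesized lower bound to $L$. For disjoint closed intervals $I_i = [a_i, b_i] \subset (0,1]$ with $\sum b_i < 1$, the random variable $\prod_i |A \cap I_i|$ is uniformly bounded (by $\prod_i \lfloor 1/a_i \rfloor$, thanks to $T_n \le 1$) and is continuous in the sorted sequence at configurations whose entries avoid the endpoints $a_i, b_i$. After a mild perturbation of these endpoints to dodge the countably many atoms of the marginals of $L$, weak convergence gives
$$
\E |L \cap I_1| \cdots |L \cap I_k| \ge \prod_{i=1}^k (\log b_i - \log a_i).
$$
Equivalently, the $k$th factorial moment measure $\nu_k$ of the point process $L$ on $(0,1]$ satisfies $\nu_k \ge \prod_i dx_i / x_i$ on $\{x_1 + \cdots + x_k < 1\}$, and Fatou applied to $T_n \le 1$ gives $\sum_i L_i \le 1$ almost surely.

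The heart of the argument, and the main obstacle, is to upgrade these lower bounds to exact equalities matching PD, using the total-mass constraint as a two-sided lever, inductively on $k$. For $k = 1$, integrating $\nu_1(dx) \ge dx/x$ against $x$ on $(0, 1-\eps)$ yields $\int_0^{1-\eps} x\, \nu_1(dx) \ge 1 - \eps$; combined with $\int x\, \nu_1 = \E \sum L_i \le 1$ this forces $\sum_i L_i = 1$ a.s.\ and $\nu_1(dx) = dx/x$ on $(0,1)$ exactly. For $k \ge 2$, I would expand $1 = (\sum_i L_i)^k$ by grouping index tuples according to set-partition type, take expectations, and use Fubini to express the ``all distinct'' term $\E \sum_{i_1, \dots, i_k \text{ distinct}} L_{i_1} \cdots L_{i_k} = \int x_1 \cdots x_k\, \nu_k(dx_1, \dots, dx_k)$ in terms of integrals of the lower-order $\nu_j$ with $j < k$, which are inductively already pinned down. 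The value of $\int x_1 \cdots x_k\, \nu_k$ is thereby determined exactly; combined with the lower bound on $\nu_k$ and the strict positivity of the weight $x_1 \cdots x_k$, this forces $\nu_k = \prod_i dx_i / x_i$ on $\{\sum x_i < 1\}$ and excludes any mass on the boundary hyperplane.

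Finally, I would reconstruct the joint law of $(L_1, \dots, L_k)$ from these intensities: a standard inclusion--exclusion expresses the density at an ordered configuration $x_1 > \cdots > x_k$ with $\sum x_i < 1$ as $k! \prod 1/x_i$ times the conditional probability that no further component of $L$ exceeds $x_k$, and this last factor satisfies Dickman's integral equation \eqref{rhorecur}, hence equals $\rho((1 - x_1 - \cdots - x_k)/x_k)$. This recovers $f_k$ and identifies $L$ with PD. The reason a one-sided hypothesis suffices is that the mass bound $T_n \le 1$ automatically supplies the matching upper bound via the identity $(\sum_i L_i)^k = 1$, so the apparent asymmetry in the hypothesis is absorbed by the constraint.
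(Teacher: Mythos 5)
The paper does not actually prove Proposition~\ref{PDonly}; it simply cites \cite{AKM} for both this and Proposition~\ref{maintheta}. So there is no internal proof to compare against, and I will assess your argument on its own terms.

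Your overall strategy is sound and, I believe, correct in outline: pass to a subsequential weak limit $L$ (tightness is free on $[0,1]^{\infty}$); transfer \eqref{simple} to the factorial moment measures $\nu_k$ of the limiting point process (the boundedness $|A_n\cap[a,1]|\le \lfloor 1/a\rfloor$, forced by $T_n\le 1$, makes the weak-convergence transfer legitimate once the endpoints $a_i,b_i$ are nudged off the countably many atoms of the marginals of $L$); use Fatou to get $\sum L_i\le 1$; and then exploit the identity $\E(\sum_i L_i)^k=1$, expanded over set partitions, to force $\int x_1\cdots x_k\,\nu_k$ to equal the PD value $1/k!$ and hence, together with the lower bound and the strict positivity of $x_1\cdots x_k$ on $(0,1]^k$, to pin $\nu_k=\nu_k^{\mathrm{PD}}$ exactly (and to kill any mass on $\{\sum x_i=1\}$). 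The induction is clean because $\nu_j$ for $j<k$ enters the partition expansion only through quantities already matched with PD, and the $k=1$ base case comes from $\int_0^{1-\eps}x\,\nu_1(dx)\ge 1-\eps$ against $\E\sum L_i\le 1$.

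The one place that needs tightening is the last step, where you go from ``$\nu_k=\nu_k^{\mathrm{PD}}$ for all $k$'' to ``$L$ has the PD law.'' As written, you propose to reconstruct the ordered density $f_k$ from the $\nu_j$ by a Palm/inclusion--exclusion formula and then independently re-derive Dickman's equation; the stated formula has a spurious $k!$ (for a fixed ordering $x_1>\cdots>x_k$ the factorial-moment density is $\prod 1/x_i$, not $k!\prod 1/x_i$), and the verification of the Dickman recursion from scratch is a detour. The cleaner and fully rigorous way to close the gap is to invoke determinacy of the point-process moment problem: for each $\eps>0$ the restriction of $L$ (and of PD) to $[\eps,1]$ has a.s.\ at most $\lfloor 1/\eps\rfloor$ points, so the inclusion--exclusion series for void and counting probabilities terminates, and identical factorial moment measures force identical laws on $[\eps,1]$ for every $\eps$, hence identical laws overall. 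You should also flag, even if only with a citation, that the $k$-th factorial moment density of PD(1) on $\{\sum x_i<1\}$ really is $\prod 1/x_i$ (this is exactly what \eqref{simple} is tuned to), since your inductive argument silently uses both that identity and the fact that PD puts no mass on $\{\sum x_i=1\}$. With these repairs the proof goes through.
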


 For arbitrary PD($\theta$) we also have
\begin{proposition}\label{maintheta}
 Let $\theta >0$.  Suppose that
$T_n \le 1$ almost surely, for all $n$, and that for some 
$ -\infty < \alpha, \beta < \infty$ with $\alpha + \beta
=1 - \theta,$ it is the case that for any collection of disjoint
closed $I_i = [a_i,b_i] \subset (0,1], \ i=1,\dots,k$ satisfying 
$b_1+\cdots + b_k < 1$, for any  $k \ge 1$, 
we have 
\begin{multline}\label{theta intensineq}
 \liminf_{n \to \infty}  \e \prod_{i=1}^k  | A_n \cap I_i|  \ \ge\\
\frac{\theta^k}{(1-a_1-\cdots-a_k)^{\alpha}(1-b_1-\cdots-b_k)^{\beta}}\prod_{i=1}^k (\log(b_i) - \log(a_i)).
\end{multline}
Then
 $L(n)$ converges in distribution to $(L_1,L_2,\dots)_{\theta}$, the Poisson--Dirichlet
process with \mbox{parameter $\theta$.}
\end{proposition}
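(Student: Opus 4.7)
The plan is to combine compactness-based weak-limit extraction, Fatou applied to the multi-intensity hypothesis, and the mass-conservation bound $T_n \le 1$, in order to pin the limiting process down as PD($\theta$).

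\textbf{Setup and passage to the limit.}  Each $L(n)$ takes values in the compact space $\{(x_1,x_2,\dots):\ 1\ge x_1\ge x_2\ge\cdots\ge 0,\ \sum x_i\le 1\}$ under the product topology, so the family is automatically tight, and by a standard projection argument it suffices to show that $(L_1(n),\dots,L_k(n))$ converges to the top $k$ coordinates of PD($\theta$) for each fixed $k$, along every weakly convergent subsequence.  The algebraic identity
$$
|A_n\cap I_1|\cdots|A_n\cap I_k|=\sum_{(j_1,\dots,j_k)\ \mathrm{distinct}}\prod_{i=1}^k \mathbf{1}\{L_{j_i}(n)\in I_i\}
$$
reinterprets the LHS of \eqref{theta intensineq} as the $k$-th factorial moment measure of the sorted process $L(n)$ evaluated on $I_1\times\cdots\times I_k$, and Fatou then transports the same lower bound to the factorial moment measures of any subsequential weak limit~$L^*$.

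\textbf{Matching the correlation function of PD($\theta$).}  The next step is to identify the RHS of \eqref{theta intensineq} as a lower estimate for the corresponding PD($\theta$) factorial moment on $I_1\times\cdots\times I_k$.  Starting from the density $f_{\theta,k}$ of the top $k$ coordinates and unsorting to recover the underlying point process, the defining integral recursion for $g_{\theta}$ shows that the $k$-point correlation function of PD($\theta$) on $\{\sum x_i<1\}$ has the form
$$
\kappa_k(x_1,\dots,x_k)=\frac{\theta^k}{x_1\cdots x_k\,(1-x_1-\cdots-x_k)^{\,1-\theta}}.
$$
The role of the decomposition $\alpha+\beta=1-\theta$ in \eqref{theta intensineq} is then simply to bound the $(1-\sum x_i)^{\theta-1}$ factor on the box $I_1\times\cdots\times I_k$ by its values at the two extreme corners $\sum a_i$ and $\sum b_i$, which handles the two signs of $\theta-1$ uniformly and reproduces the explicit $(1-\sum a_i)^{-\alpha}(1-\sum b_i)^{-\beta}$ appearing on the RHS.

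\textbf{Forcing equality and concluding.}  At this point the factorial moment measures of $L^*$ dominate those of PD($\theta$), and the main obstacle is to upgrade this domination to equality.  Here I would invoke $T_n\le 1$: weak convergence plus Fatou gives $\mathbb{E}\sum_i L^*_i\le 1$, while an explicit computation confirms $\int_0^1 x\,\kappa_1(x)\,dx=1$, so the first-order bound must be attained almost everywhere.  An induction on $k$, at each step conditioning on the first $k-1$ coordinates and applying the same mass-conservation argument on the subsimplex of size $1-x_1-\cdots-x_{k-1}$, upgrades the identification to all factorial moment measures of $L^*$; since compactly supported point processes are determined by their factorial moment measures, $L^*\stackrel{d}{=}\mathrm{PD}(\theta)$, completing the proof.
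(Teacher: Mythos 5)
Note first that this paper does not prove Proposition~\ref{maintheta} itself---it only states ``Both propositions are proved in \cite{AKM}''---so there is no in-text proof to compare against, and your argument must be judged on its own terms. Your overall strategy (read the hypothesis as a lower bound on factorial moment measures, pass to a subsequential weak limit, identify the PD($\theta$) correlation kernel $\kappa_k(x)=\theta^k/\big(x_1\cdots x_k(1-\sum x_i)^{1-\theta}\big)$, squeeze to equality via $T_n\le 1$ and $\int_0^1 x\,\kappa_1(x)\,dx=1$, and conclude by moment determinancy) is the right one, and your reading of the role of $\alpha+\beta=1-\theta$ as a two-sided corner bound for the factor $(1-\sum x_i)^{\theta-1}$ on a box is exactly correct. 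But two of your steps are stated in a way that would not survive scrutiny as written.

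First, ``Fatou transports the same lower bound to the subsequential weak limit'' invokes the wrong half of Portmanteau. The Fatou-type statement, valid for nonnegative \emph{lower} semicontinuous $f$, gives $\liminf \E f(L(n))\ge \E f(L^*)$, which is the useless direction here: you need $\E f(L^*)\ge$ RHS, i.e.\ you need control of a $\limsup$. What actually works is that, because each $I_i=[a_i,b_i]$ is closed with $a_i>0$ and $T_n\le 1$ forces $|A_n\cap I_i|\le 1/a_i$, the map $\ell\mapsto\prod_i|\ell\cap I_i|$ is bounded and \emph{upper} semicontinuous on the state space, so along a weakly convergent subsequence $\limsup_j\E f(L(n_j))\le \E f(L^*)$; combining with the hypothesized $\liminf\ge$ RHS yields $\E f(L^*)\ge$ RHS. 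This also explains why the proposition insists on \emph{closed} intervals and is precisely where $T_n\le 1$ earns its keep at the tightness/limit-passage stage, not just at the ``forcing equality'' stage. Second, your inductive upgrade is too loosely sketched: ``conditioning on the first $k-1$ coordinates'' and ``applying the mass-conservation argument on the subsimplex'' would require you to derive a lower intensity bound for the Palm (conditional) process, which is not something you have. The clean route is algebraic: expand $(\sum_j L_j^*)^k\le 1$ over set partitions of $[k]$; every term except the all-singletons one involves factorial moments of order $<k$, which by the induction hypothesis equal their PD($\theta$) values, so you obtain $\int x_1\cdots x_k\,\mu_k^*(dx)\le \int x_1\cdots x_k\,\kappa_k(x)\,dx$. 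Together with the already-established measure inequality $\mu_k^*\ge\kappa_k\,dx$ (which, to be precise, requires a shrinking-box argument to pass from the box inequality to domination of measures) and the positivity of $x_1\cdots x_k$ on the open simplex, this forces $\mu_k^*=\kappa_k\,dx$. With these two repairs the proof is correct.
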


 Both propositions are proved in \cite{AKM}.  Note that the condition $T_n \le 1$ 
ensures that $A_n$ can possess no positive accumulation
point, as promised.  

 To show at once how rapidly results can be derived using the above criteria, 
here are complete,
self-contained proofs of Theorems \ref{classicBill} and \ref{classicKing}, 
much briefer than any by previous methods. (The present proof of Theorem \ref{classicBill} 
was already presented in \cite{AKM}, but since
it is so brief, and to make the point that there is a single over-arching methodology, 
we reproduce it here.)

 To prove Theorem \ref{classicBill},
let $A_n$ be the multiset whose elements are $\log p/\log n$ for all prime factors $p$ 
of a random $1 \le N \le n$, including 
any multiple copies, and let $A_n^1$ be the underlying set, i.e. with all positive 
multiplicities truncated down to 1. For any prime $p$ at all 
let $I(p|N)$ denote the indicator function
of the event $p|N$. Note that since $\log p_1 + \log p_2 + \cdots = \log N \le \log n$ we 
automatically get $T_n \le 1$. Note also that for any test interval $[a_i,b_i]$ we have
$\log p / \log n \in \{ |A_n \cap [a_i,b_i]|\}$ if and only if $n^{a_i} \le p \le n^{b_i}$. Thus,
writing things out explicitly for $k = 2$, we get 
$$
 E\{ |A_n \cap [a_1,b_1]|\  |A_n \cap [a_2,b_2]|\} 
\ge E\{ |A^1_n \cap [a_1,b_1]|\  |A^1_n \cap [a_2,b_2]|\}
$$
$$
 = E \sum_{n^{a_1}\le p \le n^{b_1}}I(p|N) \sum_{n^{a_2} \le q \le n^{b_2}}I(q|N) 
$$
$$
 = \sum_{n^{a_1} \le p \le n^{b_1}} \sum_{n^{a_2} \le q \le n^{b_2}} E \{I(pq|N)\}
$$
$$
 =\sum_{n^{a_1}\le p \le n^{b_1}} 
\sum_{n^{a_2} \le q \le n^{b_2}}\frac{1}{pq} + O\left(\frac{n^{b_1 + b_2}}{n}\right)
$$
$$
 =(\log b_1 -\log a_1)(\log b_2 -\log a_2) + o(1).
$$
The second equality exploits the fact that always $p \ne q$ since 
they must lie in disjoint intervals, while the third equality depends on
the estimate $\left|\Pr(p|N) -1/p\right| \le 1/n$ together with the fact that there are at most
$n^{b_1}n^{b_2}$ summands. The fourth uses Mertens' formula together with the hypothesis that 
$b_1 + b_2 < 1$. Now take the $\liminf$ as $n \to \infty$. QED.

 To prove Theorem \ref{classicKing}, let $A_n$ be the multiset whose elements are the quotients $l/n$
where $l$ ranges over the lengths of all irreducible cycles of a random permutation of length
$n$. Trivially we have $T_n = n$.  Note that for any test interval $[a_i,b_i]$ we have
$l/n \in \{ |A_n \cap [a_i,b_i]|\}$ if and only if $a_in \le l \le b_in$.  

 Also, for any
positive integer $j$ let $C_j$ be the number of cycles of length $j$ in a random permutation of
length $n$.  Then it is well known that provided $j_1 + \cdots + j_k \le n$ we have 
$E\{C_1\cdots C_k\} = \frac{1}{j_1\cdots j_k}$ exactly. Thus, again writing 
things out explicitly for $k = 2$, we get
$$
 E\{ |A_n \cap [a_1,b_1]|  |A_n \cap [a_2,b_2]|\}
$$
$$
 = E \sum_{a_1n \le j \le b_1n}C_j \sum_{a_2n \le k \le b_2n}C_k 
$$
$$
=  \sum_{a_1n \le j \le b_1n} \sum_{a_2n \le k \le b_2n} E{C_j C_k}
$$
$$
= \sum_{a_1n \le j \le b_1n} \sum_{a_2n \le k \le b_2n} \frac{1}{jk} = 
\left(\sum_{a_1n \le j \le b_1n}\frac{1}{j}\right)\left(\sum_{a_2n \le k \le b_2n} \frac{1}{k}\right)
$$
$$
= (\log b_1 -\log a_1)(\log b_2 -\log a_2) + o(1),
$$
where the requirement $j+k \le n$ is enforced by the hypothesis $b_1 + b_2 <1$, and 
this time we need only know about harmonic sums.
Again take the lim inf of both sides, QED.

\section{PD limit theorems for general combinatorial families}\label{gencomb}

 In this section and the next we present our main results, 
namely, new proofs of generalizations
of the permutation result to randomly selected 
decomposable combinatorial objects. That is, we prove Poisson--Dirichlet
limit theorems for the non-increasing sequence of scaled sizes of irreducible components
of random decomposable objects, as total size grows. 
We cover the three classical families, namely, 
Labeled Assemblies, Multisets, and Selections. The objects are chosen 
equiprobably or, more generally, with their selection probabilities ``tilted'' 
to be proportional to
$$
\phi^K,
$$
where $\phi >0$ is some fixed parameter and $K = K(\mbox{object})$ is the 
total number of irreducible 
components of an object.  (Thus $\phi=1$ corresponds to equiprobable selection.)

\subsection{The Master Theorem}
 The proofs for the three families will be patterned after the proof of 
Theorem \ref{classicKing}, 
as presented in Section~\ref{charsec}.  Specifically, they will all be corollaries of the
following Master Theorem.  We suppose given a family of objects of various weights or sizes
$n$, where $n$ is a positive integer; that there are finitely many objects of each size $n$; 
that each object decomposes somehow into finitely
many irreducible objects, uniquely up to ordering; and that the size of an object is
equal to the sum of the sizes of its irreducible components.  

 Given an object of size $n$, if $K$ is the total number of irreducible components, let
$l_1,l_2,\dots,l_K$ be the sizes of those components, arranged in nonincreasing order.
We suppose that any  multiple copies are always included so that, e.g., we have 
$l_1 + l_2 + \cdots + l_K =n$. Let $C_1,\dots,C_n$ be the numbers of components of our object, 
of sizes $1,\dots,n$ respectively;  then also $C_1 + 2C_2 + \cdots +nC_n =n$. Note that if indices
$i_1,\dots,i_k$ have a sum exceeding $n$, then necessarily at least one of the counts
$C_{i_1},\dots,C_{i_k}$ must vanish.

 We are given a sequence of probability distributions, one for each $n$, on the objects of
size $n$. Thus $K$, the sizes $l_1,l_2,\dots,l_K$ and the counts $C_1,\dots,C_n$ all become
random variables, for each value of $n$.

Also consider a sequence of families, one family for each $n$, of $k$-tuples $(i_1,\dots,i_k)$
of distinct indices $1 \le i_1,\dots,i_k \le n$, for fixed $k$, with all ratios
$i_1/n,\dots,i_k/n$ bounded away from $0$ as $n \to \infty$, uniformly over the whole 
sequence. Call such a sequence a {\em good sequence of families of $k$-tuples}.

 We can now state the Master Theorem:
\begin{theorem}\label{masterthm}
 Suppose our combinatorial objects and probability distributions are such that that 
for some $\theta >0$ and for any good sequence of families of $k$-tuples $(i_1,\dots,i_k)$,
the expected values $E\{C_{i_1}\cdots C_{i_k}\}$ satisfy
\begin{equation}\label{masterineq}
E\{C_{i_1}\cdots C_{i_k} \} =  
\frac{\theta^k}{(1-m/n)^{1- \theta}}\frac{1}{i_1 i_2\cdots i_k}\left(1+o(1)\right),
\end{equation}
uniformly over the sequence of families, where we write 
$m = m(i_1,\dots,i_k) =: i_1 + \cdots + i_k$.
Then, as $n \to \infty$ the joint distribution of the initial $k$-long sequence of scaled sizes
$$
l_1/n,\dots,l_k/n
$$
converges to the initial $k$-dimensional projection of PD($\theta$).
\end{theorem}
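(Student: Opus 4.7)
The plan is to mirror the proof of Theorem~\ref{classicKing} given in Section~\ref{charsec}, applying Proposition~\ref{maintheta} to the random multiset
$$A_n := \{l/n : l \text{ is the size of an irreducible component of the random object}\},$$
counted with multiplicities. Since the component sizes sum to $n$, we get $T_n = 1$, satisfying the hypothesis $T_n \le 1$, and the sorted sequence $L(n)$ is precisely $(l_1/n, l_2/n, \ldots)$.

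Fix disjoint closed intervals $I_r = [a_r, b_r] \subset (0,1]$, $r = 1, \ldots, k$, with $\sum_r b_r < 1$. Writing $|A_n \cap I_r| = \sum_{a_r n \le j \le b_r n} C_j$ and expanding the product,
$$\E \prod_{r=1}^k |A_n \cap I_r| \;=\; \sum_{(j_1,\ldots,j_k)} \E\{C_{j_1} \cdots C_{j_k}\},$$
where the sum ranges over tuples with $j_r \in [a_r n, b_r n] \cap \Z$; disjointness of the $I_r$ forces the $j_r$ to be pairwise distinct, and each ratio $j_r/n \in [a_r, b_r]$ stays bounded away from $0$, so these tuples form a good sequence of families. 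Hypothesis (\ref{masterineq}) therefore applies uniformly over the box, giving
$$\E\{C_{j_1} \cdots C_{j_k}\} \;=\; \frac{\theta^k}{(1 - m/n)^{1-\theta}}\,\frac{1}{j_1 \cdots j_k}\,(1 + o(1)),$$
with $m = j_1 + \cdots + j_k$, so $m/n \in [\sum_r a_r,\ \sum_r b_r]$.

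To match the form required by Proposition~\ref{maintheta}, I would split on the sign of $1-\theta$ and set $(\alpha,\beta) = (1-\theta,0)$ when $\theta \le 1$ and $(\alpha,\beta) = (0,1-\theta)$ when $\theta > 1$; in both cases $\alpha + \beta = 1 - \theta$. Monotonicity of $t \mapsto t^{\theta-1}$ on $(0,1]$ (in different directions in the two cases) then yields the uniform lower bound
$$\frac{1}{(1-m/n)^{1-\theta}} \;\ge\; \frac{1}{(1-\sum_r a_r)^{\alpha}\,(1-\sum_r b_r)^{\beta}}$$
across the entire box. Pulling this constant out and using $\sum_{a_r n \le j \le b_r n} 1/j \to \log b_r - \log a_r$, the summed expression tends to
$$\frac{\theta^k}{(1-\sum_r a_r)^{\alpha}\,(1-\sum_r b_r)^{\beta}} \prod_{r=1}^k (\log b_r - \log a_r),$$
which is exactly the right-hand side of (\ref{theta intensineq}). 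Taking $\liminf$ and invoking Proposition~\ref{maintheta} yields convergence to PD($\theta$).

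The main obstacle is really a bookkeeping point: the hypothesis produces an $n$-dependent factor $(1-m/n)^{-(1-\theta)}$ that varies across the tuples in the box, and one must recognize that the freedom to choose $\alpha,\beta$ with $\alpha + \beta = 1-\theta$ in Proposition~\ref{maintheta} is precisely what lets us absorb this via the monotonicity dichotomy above. A secondary concern is that the $o(1)$ in (\ref{masterineq}) survives summation, but this is immediate from the stated uniformity over good sequences: the $O(n^k)$ summands combine to a main term of order $1$, so the error aggregates to $o(1)$ as well.
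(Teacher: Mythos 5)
Your proof is correct and follows essentially the same route as the paper's: apply Proposition~\ref{maintheta} to $A_n = \{l_i/n\}$, expand the product of counts as a sum of mixed moments $\E\{C_{j_1}\cdots C_{j_k}\}$, observe that the index tuples form a good sequence so that (\ref{masterineq}) applies uniformly, bound $(1-m/n)^{-(1-\theta)}$ from below via the monotonicity dichotomy on the sign of $1-\theta$ with $(\alpha,\beta)=(1-\theta,0)$ or $(0,1-\theta)$, and finish with harmonic sums. The paper's argument is the same in every essential respect.
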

\begin{proof}
 We apply Proposition \ref{maintheta}. In the notation of that proposition, 
let $A_n$ be the multisubset containing the elements $l_1/n,l_2/n,\dots$, and 
let $I_j = [a_j,b_j]  \subset (0,1], j = 1,\dots,k,$ be disjoint intervals with 
$a_j>0$ for $j=1,\dots,k$ and with $b_1 + \cdots + b_k <1$. 
Then we have
$$
 E\{\: |A_n \cap [a_1,b_1]|\cdots  |A_n \cap [a_k,b_k]|\:\}
 = E\{ \sum_{a_1n < i_1 \le b_1n}C_{i_1} \cdots \sum_{a_kn < i_k \le b_kn}C_{i_k}\:\} 
$$

$$
=  \sum_{a_1n < i_1 \le b_1n} \cdots \sum_{a_kn < i_k \le b_kn} E\{C_{i_1} \cdots  C_{i_k}\}
$$
$$
=    \sum_{a_1n < i_1 \le b_1n} \cdots \sum_{a_kn < i_k \le b_kn} 
\frac{\theta^k}{(1-m/n)^{1- \theta}}\frac{1}{i_1 i_2\cdots i_k}\left(1+o(1)\right)
$$
where we may appeal to \eqref{masterineq} in the last step because we claim that the 
sequence of families of $k$-tuples of indices arising, as $n \to \infty$, from the 
$k$-fold summations, forms a good sequence: The indices in each $k$-tuple are distinct 
because the intervals
$I_1,\dots,I_k$ are disjoint, and the ratios $i_1/n,\dots,i_k/n$ are uniformly bounded away 
from $0$ because for each $j$ we have $ 0< a_j \le i_j/n$ where the numbers $a_1,\dots,a_k$ 
are independent of $n$. 

 Note that always, $a_1 + \cdots + a_k \le m/n \le b_1 + \cdots + b_k$.  If $\theta \le 1$, 
we may then write 
$$
E\{\: |A_n \cap [a_1,b_1]|\cdots  |A_n \cap [a_k,b_k]|\:\}
$$
$$
\ge \frac{\theta^k}{(1-a_1 - \cdots - a_k)^{1- \theta}}
\left(\sum_{a_1n < i_1 \le b_1n} \frac{1}{i_1}\right)\cdots \left(\sum_{a_kn < i_k \le b_kn} \frac{1}{i_k}\right)
\left(1+o(1)\right)
$$ 
$$
= \frac{\theta^k}{(1-a_1 - \cdots - a_k)^{1- \theta}}\prod_{j=1}^k(\log b_j -\log a_j) + o(1).
$$
If $\theta >1$ we proceed in the same way, except that $-a_1-\cdots-a_k$ is 
replaced with $-b_1-\cdots-b_k$.
In either case, take $\liminf_{n \to \infty}$ of both ends of the inequality and apply 
Proposition \ref{maintheta}, with $\alpha = 1-\theta$ and $\beta =0$ or with 
$\beta = 1-\theta$ and $\alpha =0$, respectively. 
\end{proof}

 Of course, in any application of Theorem \ref{masterthm}, the burden will be the establishment
of \eqref{masterineq} with the required uniformity guarantees.  Conveniently, well-honed tools
for this already exist.

\subsection{Exp-log asymptotics}
 We will use two formulas of Flajolet and Soria \cite{FSo}, \eqref{comp} and 
\eqref{phiobj} below,
which we extract from the exposition in~\cite[Section~VII.2]{FSe}, together with several others 
in the same spirit. They are conveniently packaged consequences of asymptotic formulas of 
Flajolet and Odlyzko, especially designed for certain
combinatorial applications.\footnote{Formulas \eqref{comp} and \eqref{phiobj} originally 
appeared as preliminary
results in \cite{FSo}, where they were used as ingredients for various other limit theorems.}
Let G(z) be a function of a complex variable 
analytic near $z=0$, whose series expansion at $0$ has real non-negative coefficients
with finite radius of convergence $\rho$. We assume, with Flajolet and Soria, 
that for some $\theta>0$ and some real 
$\lambda$ 
\begin{description}
\item[FS 1] $\rho$ is the unique singularity of $G(z)$ on $|z| = \rho$;
\item[FS 2] G(z) is continuable to a slightly larger open domain $\Delta$ 
consisting of a disc of radius exceeding $\rho$ centered at $0$, but possibly excluding a 
closed acute-angled wedge domain $|\arg(z-\rho)| \le \gamma$ with vertex at $\rho$, for some 
$0 \le \gamma < \pi/2$;
\item[FS 3] we have
$$
G(z) = \theta \log \frac{1}{1-z/\rho} + \lambda +O\left(\frac{1}{(\log(1-z/\rho))^2}\right)
$$
as $z\to \rho$ in $\Delta$.
\end{description}
For later reference, note that 
$$
\log \frac{1}{1-z/\rho}
$$
itself certainly satisfies all three items, continuing analytically, as it does, 
to the complement of the real ray $z \ge \rho$.

 Given such a $G$, let $\phi >0$. Then the formulas of Flajolet and Soria are as follow:
the power series coefficients of $G$ around $z=0$ satisfy
\begin{equation}\label{comp}
[z^n]G(z) = \frac{\theta}{n}\rho^{-n}\left(1 + O\left((\log n)^{-2}\right)\right),
\end{equation}
%
and if $F(z) = \exp(\phi G(z))$  then 
\begin{equation}\label{phiobj}
[z^n]F(z) = \frac{e^{\phi \lambda}}{\Gamma(\phi \theta)}n^{\phi \theta-1} \rho^{-n}
\left(1 + O\left((\log n)^{-2}\right)\right).
\end{equation}

 Now add the restriction that 
$$
\rho < 1
$$
and assume that the numbers $g_i$ are {\em integers}.
Then if $F(z) = \exp(\phi G(z)+ R(z))$, where 
\begin{equation}\label{selR}
R(z):= \sum_{j \ge 2}(-1)^{j+1}\phi^j G(z^j)/j
\end{equation}
then
\begin{equation}\label{selobj}
[z^n]F(z) = \frac{Ce^{\phi \lambda }}{\Gamma(\phi \theta)}n^{\phi \theta-1} \rho^{-n}
\left(1 + O\left((\log n)^{-2}\right)\right)
\end{equation}
for a certain nonzero constant $C$ to be described.

 Finally, also restrict $\phi$ to 
$$
\rho^{-1} > \phi >0
$$
but release the restriction of the numbers $g_i$ to integers.
Then if $F(z) = \exp(\phi G(z)+ R(z))$, where this time
\begin{equation}\label{mulR}
R(z):= \sum_{j \ge 2}\phi^j G(z^j)/j,
\end{equation}
then we get
\begin{equation}\label{mulobj}
[z^n]F(z) = \frac{Ce^{\phi \lambda}}{\Gamma(\phi \theta)}n^{\phi \theta-1} \rho^{-n}
\left(1 + O\left((\log n)^{-2}\right)\right)
\end{equation}
with $C \ne 0$, same as \eqref{selobj}, once again.

 As mentioned, \eqref{comp} and \eqref{phiobj} are proved
in~\cite[Section~VII.2]{FSe}.\footnote{Formula \eqref{phiobj} is
  actually proved there for $\phi = 1$; but the more general formula
  is a trivial corollary of that one.} 
 
 As for \eqref{mulobj}, we claim that $R(z)$ as defined in \eqref{mulR} 
is analytic in an open disc about $0$ of some radius exceeding $\rho$. If so, then since 
$$
R(z) - R(\rho) = O(z-\rho) = O\left(\frac{1}{(\log(1-z/\rho))^2}\right)
$$ near $z = \rho$, we find that \eqref{mulobj} is a corollary of 
\eqref{phiobj}, with  $C= \exp(R(\rho))$,
if $\phi G(z) + R(\rho)/\phi + (R(z) - R(\rho))/\phi$ replaces $G$ in {\bf FS 3}. 

 To see that $R(z)$ is as claimed, note that for $j\ge 2$ 
each function $G(z^j)$ is analytic in the open disc of radius $\rho^{1/j} \ge \rho^{1/2} >\rho$, 
and also that they are uniformly $O(z^2)$ in any closed disc of radius less than  $\rho^{1/2}$.
Also, when $\phi < \rho^{-1}$, we have $|\phi z|<1$ in the open disc of radius 
$\min\{\phi^{-1},\rho^{1/2}\}$; and this radius exceeds $\rho$.  Therefore,
the series defining $R(z)$ converges uniformly and absolutely in any compact subset of that disc.
(This argument too was given by Flajolet and Soria, for $\phi =1$.) This proves \eqref{mulobj}.

 Although the same argument also works for \eqref{selobj}, given \eqref{selR}, 
provided $\phi < \rho^{-1}$, for larger $\phi$ it gets the series \eqref{selR} 
defining $R(z)$ to converge only for $|z| < \phi^{-1} \le \rho$, which is not 
good enough. To derive \eqref{selobj} for all positive $\phi$ we need to look 
under the hood a bit.  

 From {\bf FS 3} we have 
$$
F(z) = \exp(G(z)) = 
e^{\lambda} (1-z/\rho)^{-\theta}\left(1 + O\left(\frac{1}{(\log(1-z/\rho))^2}\right)\right),
$$
and it is {\em this} formula from which \eqref{phiobj} follows 
via results of Flajolet and Odlyzko; see the
discussion in~\cite[Section~VII.2 ]{FSe}. From $F(z) = \exp(\phi G(z)+ R(z))$, then, we get
\begin{equation}\label{expression}
F(z) =  
e^{R(z)}e^{\phi \lambda} (1-z/\rho)^{-\phi \theta}\left(1 + O\left(\frac{1}{(\log(1-z/\rho))^2}\right)\right).
\end{equation}

 Regardless of the behavior of $R(z)$, we claim that 
\begin{lemma}\label{Sanal}
 For any fixed $\phi >0$,
$S(z):= e^{R(z)}$ continues analytically to a disc around $0$ of radius greater than $\rho$,
and $S(\rho) \ne 0$.
\end{lemma}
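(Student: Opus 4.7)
The plan is to exploit the integrality of the coefficients $g_n$ (in force for the selection case) to rewrite $S(z) = e^{R(z)}$ as a convergent infinite product of entire functions, circumventing the fact that the defining series \eqref{selR} for $R(z)$ may have a small radius of convergence when $\phi \ge \rho^{-1}$. Formally, expanding $\log(1+\phi z^n)=\sum_{j\ge 1}(-1)^{j+1}(\phi z^n)^j/j$ and interchanging summations shows that
\begin{equation*}
\phi G(z) + R(z) = \sum_n g_n \log(1+\phi z^n)
\end{equation*}
wherever the double sum converges absolutely. Since each $g_n$ is a non-negative integer, exponentiating gives (a priori formally)
\begin{equation*}
S(z) = e^{R(z)} = \prod_n (1+\phi z^n)^{g_n}\, e^{-\phi g_n z^n},
\end{equation*}
each factor of which is entire in $z$.

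The key step is to show this product converges uniformly on compact subsets of the disc $\{|z| < \rho^{1/2}\}$, which has radius strictly greater than $\rho$ since $\rho < 1$. For this, note that $|\log(1+w) - w| \le |w|^2$ holds whenever $|w|\le 1/2$; so for any closed subdisc $\{|z|\le r\}$ with $r<\rho^{1/2}$, and all $n$ large enough that $\phi r^n \le 1/2$, we have
\begin{equation*}
g_n \bigl|\log(1+\phi z^n) - \phi z^n\bigr| \le g_n\phi^2 |z|^{2n} \le g_n\phi^2 r^{2n}.
\end{equation*}
Combined with the Flajolet--Soria bound $g_n = O(\rho^{-n}/n)$ from \eqref{comp}, the tail of the log-series is majorized by $O(\sum_n (r^2/\rho)^n/n)$, which is finite because $r^2 < \rho$. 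Hence the series of logarithms converges absolutely and uniformly on $\{|z|\le r\}$, and a standard Weierstrass argument yields an analytic function on $\{|z|<\rho^{1/2}\}$, agreeing with $e^{R(z)}$ wherever the original definition applies.

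For the non-vanishing at $z=\rho$: since $\rho$ lies inside the disc of convergence just established and is positive real, each factor $(1+\phi\rho^n)^{g_n}e^{-\phi g_n \rho^n}$ is a strictly positive real, and the series of logarithms converges at $z=\rho$ (by the same estimate, now with $|z|=\rho<\rho^{1/2}$) to a finite real number. Therefore $S(\rho)$ equals the exponential of that finite real number and is strictly positive.

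The point requiring the most care is the identification of the two expressions for $S(z)$: the defining series $R(z)=\sum_{j\ge 2}(-1)^{j+1}\phi^j G(z^j)/j$ may converge only in a very small neighborhood of the origin when $\phi$ is large, so I would need to check that wherever it is absolutely convergent, it does in fact equal $\sum_n g_n[\log(1+\phi z^n) - \phi z^n]$ (which comes down to Fubini on a small enough disc near $0$, where all the relevant series converge absolutely). Once equality holds on any open neighborhood of $0$, the product representation is the unique analytic continuation and the lemma follows.
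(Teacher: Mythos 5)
Your proposal is essentially the paper's own argument, with only cosmetic differences in bookkeeping. The paper likewise exploits the integrality of the $g_i$ to write $S(z)=\prod_i\bigl((1+\phi z^i)e^{-\phi z^i}\bigr)^{g_i}$, obtains the identity $R(z)=\sum_i g_i(\log(1+\phi z^i)-\phi z^i)$ on a small disc by rearranging Taylor series, and controls the tail on $\{|z|<\rho^{1/2}\}$ via the same $O(\phi^2 g_i |z|^{2i})$ estimate combined with $g_i=O(\rho^{-i}/i)$ from~\eqref{comp}; the only stylistic difference is that the paper splits the product at a fixed cutoff $\xi$ (finite part of polynomials times $e^{T(z)}$ for the tail), whereas you run the Weierstrass convergence argument directly on the whole infinite product, and both conclude $S(\rho)>0$ by positivity of each real factor.
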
 
 If so, then  from replacing $e^{R(z)}$ in \eqref{expression} with 
$S(z) = S(\rho) \times \frac{S(z)}{S(\rho)} = S(\rho)\left(1 + O(z-\rho)\right)$ near $z=\rho$,
we immediately deduce \eqref{selobj}, with $C= S(\rho)$.  So it remains to prove the lemma,
which we now do.

 Fix $\phi > 0$. If we restrict to the domain
$$
D = \{z: |z| < \min(\phi^{-1},\rho^{1/2})\},
$$
then from rearranging the Taylor expansions of $\log$ terms we get 
\begin{equation}\label{Rseries}
R(z) = \sum_{i \ge 1}g_i\left(\log(1+\phi z^i) - \phi z^i\right),
\end{equation}
a valid identity between analytic functions on $D$.

 Now pick an index $\xi >0$ for which $\phi \rho^{\xi/2} <1$, and set
$$
T(z): = \sum_{i \ge \xi}g_i\left(\log(1+\phi z^i) - \phi z^i\right).
$$
Since on any compact subset of $\{|z| < \rho^{1/2}\}$ the terms 
$g_i\left(\log(1+\phi z^i) - \phi z^i\right)$ are $O(\phi^2g_i z^{2i})$, uniformly for $i \ge \xi$,
we see from $\eqref{comp}$ that $T(z)$ defines an analytic function on the open disc 
$\{|z| < \rho^{1/2}\}$.  Also, since we have assumed that the $g_i$'s are non-negative integers, the
expressions $(1+\phi z^i)^{g_i}$ are polynomials, hence certainly 
single valued and analytic on the same disc. Therefore the formula
$$
S(z) = e^{R(z)} = 
\left(\prod_{1\le i < \xi} \left((1+\phi z^i)\exp(-\phi z^i)\right)^{g_i}\right) e^{T(z)}
$$ 
continues $S(z)$ analytically to the open disc $\{|z| < \rho^{1/2}\}$; and by 
inspection\footnote{Note that since $S(z)$ does possess zeroes for $|z| < \rho^{1/2}$ when $\phi$ is
large enough, $R(z)$ itself {\em cannot} then continue to that domain.}
we have $S(\rho) \ne 0$. This completes the proof of the lemma and, hence, of \eqref{selobj}.

\section{The three combinatorial families} 
\subsection{Assemblies}

 A permutation of length $n$ may be thought of as a partition of $[n] := \{1,\dots,n,\}$
into disjoint nonempty blocks, where on each block of size $i$ one of $m_i = (i-1)!$ possible cycle 
structures is imposed.  More generally, given a sequence $m_1,m_2,\dots$ of positive integers
an {\em assembly of size $n$} is a partition of $[n]$ into disjoint nonempty blocks, 
where on each block of size $i$ one of $m_i$ possible structures is imposed, 
called ``irreducible''.\footnote{In examples
of interest the numbers $m_i$ are not arbitrary, of course -- they are the numbers of irreducible
combinatorial objects of some sort, of sizes $i$.} If 
$$
M(x) = \sum_{i \ge 1} m_i x^i /i!
$$
and
$$
Q(x) = \sum_{n \ge 0} q(n)x^n/n!
$$ 
are the exponential generating functions for the numbers of irreducible objects of sizes $i$
and the total numbers of assemblies on the set $[n]$,
then it is well-known that assemblies are characterized by the formula
\begin{equation}
Q(x) = \exp(M(x)).
\end{equation}
(Conventionally, we have $q(0) = 1$.)
Further, if $q(n,k)$ is the number of objects of size $n$ and with $k$ irreducible components, 
then if we write
$$
q_{\phi}(n) = \sum_{k=1}^n q(n,k)\phi^k
$$
and
$$ 
Q(x,\phi) = \sum_{n \ge 0} q_{\phi}(n)x^n/n!
$$
for some positive parameter $\phi$, we have 
\begin{equation}
Q(x,\phi) = \exp(\phi M(x)).
\end{equation}
See, e.g.,~\cite[Section 9.1]{AT94}.

 Given a family of assemblies, i.e. given the sequence $m_1,m_2,\dots$, suppose an
assembly of size $n$ is picked at random, either uniformly or, more generally, from the
tilted distribution with parameter $\phi$. Let $C_1,\dots,C_n$ be the counts of its irreducible
components of sizes $1$ through $n$, respectively. 
For any $k$-tuple of distinct positive indices $i_1,\dots,i_k$  with $m = i_1+ \cdots + i_k \le n$,
the following expression for the mixed moment  $E\{C_{i_1}\cdots C_{i_k} \}$
is a special case of formula (126) of \cite{AT94}, specialized down to simple products:
\begin{equation}\label{assembmom}
E\{C_{i_1}\cdots C_{i_k} \} = 
\rho^{-m}\frac{n!}{q_{\phi}(n)}\frac{q_{\phi}(n-m)}{(n-m)!}\prod_{j=1}^{k}\left(\frac{\phi m_{i_j}\rho^{i_j}}
{i_j!}\right).
\end{equation}

 We can combine \eqref{assembmom} with the Flajolet-Soria formulas discussed above. 
We suppose we are given a family of assemblies with exponential generating function
$M(x) = \sum_{i \ge 1} m_i x^i /i!$ for the numbers of irreducible objects of sizes $1,2,\dots$.
\begin{lemma}\label{assemblemma}
 If conditions \textbf{FS 1, FS 2,} and \textbf{FS 3} are satisfied for $G(z) = M(z)$,
then for arbitrary $\phi > 0$ we have
\begin{equation}\label{assembineq}
E\{C_{i_1}\cdots C_{i_k} \} =  
\frac{(\phi \theta)^k}{(1-m/n)^{1-\phi \theta}}\frac{1}{i_1 i_2\cdots i_k}
\left(\prod_{j=1}^k\left(1 + O\left(\frac{1}{(\log i_j)^2}\right)\right) +o(1)\right).
\end{equation}
\end{lemma}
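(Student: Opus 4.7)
The plan is to start from the exact formula \eqref{assembmom} and substitute the Flajolet--Soria asymptotics where applicable. The formula has three ``pieces'' of interest: the ratio $q_\phi(n-m)/q_\phi(n)$ (absorbing the factorials), the factor $\rho^{-m}$, and the product $\prod_j (\phi m_{i_j}\rho^{i_j}/i_j!)$. Since $Q(x,\phi) = \exp(\phi M(x))$ and $G=M$ satisfies \textbf{FS 1}--\textbf{FS 3} by hypothesis, formula \eqref{phiobj} applies with this $G$ and the given $\phi$.

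First, I would apply \eqref{comp} to each factor in the product, to get
$$
\frac{\phi m_{i_j}\rho^{i_j}}{i_j!} = \phi\rho^{i_j}[z^{i_j}]M(z) = \frac{\phi\theta}{i_j}\left(1 + O\left(\frac{1}{(\log i_j)^2}\right)\right),
$$
so the product contributes $(\phi\theta)^k/(i_1\cdots i_k)$ times a product of $(1+O((\log i_j)^{-2}))$ error factors. Uniformity here is immediate since \eqref{comp} is a statement about fixed-index coefficients with explicit error bound.

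Next, I would handle the ratio
$$
\rho^{-m}\frac{n!}{q_\phi(n)}\frac{q_\phi(n-m)}{(n-m)!} = \rho^{-m}\frac{[z^{n-m}]Q(z,\phi)}{[z^n]Q(z,\phi)}
$$
by substituting \eqref{phiobj} in numerator and denominator. The constants $e^{\phi\lambda}/\Gamma(\phi\theta)$ cancel, the $\rho^{-(n-m)}/\rho^{-n} = \rho^m$ cancels the leading $\rho^{-m}$, and the ratio of powers of $n$ collapses to $((n-m)/n)^{\phi\theta-1} = (1-m/n)^{\phi\theta-1} = 1/(1-m/n)^{1-\phi\theta}$. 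The error factor coming from the two applications of \eqref{phiobj} is $(1+O((\log n)^{-2}))/(1+O((\log(n-m))^{-2})) = 1+o(1)$, provided $n-m \to \infty$. Multiplying everything together yields exactly \eqref{assembineq}.

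The only delicate point I foresee is the uniformity and legitimacy of the $o(1)$ in the $q_\phi(n-m)/q_\phi(n)$ estimate, since \eqref{phiobj} requires its argument to be large. In the application of this lemma inside the Master Theorem, the summation ranges enforce $m/n \le b_1+\cdots+b_k < 1$, so $n-m \ge (1-\sum b_j)n \to \infty$ and this $o(1)$ is genuinely uniform; I would either remark this explicitly or state the lemma with the tacit understanding (which the $o(1)$ already signals) that $m/n$ stays bounded away from $1$. Apart from this bookkeeping, the whole argument is just algebra on the Flajolet--Soria asymptotics, so the ``hard part'' is purely notational: keeping the various $O$ and $o$ error terms packaged in the exact shape displayed on the right-hand side of \eqref{assembineq}.
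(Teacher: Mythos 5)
Your proposal is correct and follows exactly the paper's (one-sentence) route: substitute \eqref{comp} into each factor of the $k$-fold product and \eqref{phiobj} into $\rho^{-m}\,n!\,q_\phi(n-m)/\bigl(q_\phi(n)\,(n-m)!\bigr) = \rho^{-m}\,[z^{n-m}]Q(\cdot,\phi)/[z^n]Q(\cdot,\phi)$, cancel the constants and $\rho$-powers, and collect the error terms. Your caveat about the $o(1)$ requiring $n-m\to\infty$ (hence $m/n$ bounded away from $1$) is a genuine subtlety that the paper's terse proof leaves implicit and later relies on the Master Theorem's constraint $m/n \le b_1+\cdots+b_k < 1$ to cover, so your version is if anything slightly more careful on that bookkeeping point.
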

\begin{proof} 
With $F(z) = \exp(\phi M(z)) = Q(z,\phi)$, plugging \eqref{comp} and \eqref{phiobj} 
into \eqref{assembmom} immediately yields \eqref{assembineq}, uniformly over all
$k$-tuples of distinct positive indices $i_1,\dots,i_k$  with $i_1+ \cdots + i_k \le n$.
\end{proof}

 We can now give the main result:
\begin{theorem}
Let $l_1 \ge l_2 \ge \cdots $ be the irreducible component sizes of a random assembly on the
set $[n]$, chosen
from a tilted distribution with parameter $\phi$, and define $L_{jn} = l_j/n$, where the latter
sequence is padded out with zeros.  Suppose the Flajolet-Soria conditions 
\textbf{FS 1, FS 2,} and \textbf{FS 3} are satisfied when $G(z) = M(z)$. Then for each $k>0$,
the joint distribution of $L_{1n},\dots,L_{kn}$ converges to the initial $k$-dimensional
joint PD($\phi \theta$) distribution.
\end{theorem}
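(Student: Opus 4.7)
The plan is to apply the Master Theorem (Theorem~\ref{masterthm}) directly, using Lemma~\ref{assemblemma} as the input. The two statements are designed to fit together: Lemma~\ref{assemblemma} produces an asymptotic formula for the mixed moments $E\{C_{i_1}\cdots C_{i_k}\}$ which, up to replacing $\theta$ by $\phi\theta$, is exactly the hypothesis \eqref{masterineq} of the Master Theorem. So the strategy is really just a matter of matching parameters and verifying that the error term in Lemma~\ref{assemblemma} vanishes uniformly over the class of index-tuples that the Master Theorem cares about.

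First, I would invoke Lemma~\ref{assemblemma}, which applies precisely because the Flajolet--Soria conditions \textbf{FS 1, FS 2, FS 3} are assumed for $G(z)=M(z)$. This gives, for every $k$-tuple of distinct positive indices $i_1,\dots,i_k$ summing to $m\le n$,
$$
E\{C_{i_1}\cdots C_{i_k}\} = \frac{(\phi\theta)^k}{(1-m/n)^{1-\phi\theta}}\frac{1}{i_1\cdots i_k}\left(\prod_{j=1}^k\left(1+O\!\left(\frac{1}{(\log i_j)^2}\right)\right)+o(1)\right).
$$
This is exactly \eqref{masterineq} with the role of $\theta$ played by $\phi\theta$, modulo the error factor in the parentheses.

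Second, I would verify the uniformity needed by the Master Theorem. The Master Theorem asks for \eqref{masterineq} to hold uniformly along any \emph{good sequence of families of $k$-tuples}, meaning any sequence for which the ratios $i_j/n$ are uniformly bounded away from $0$. For such a sequence, $\log i_j \ge \log(cn)\to\infty$ for some constant $c>0$, so each factor $1+O((\log i_j)^{-2})$ equals $1+o(1)$ uniformly. Expanding the finite product over $j=1,\dots,k$ shows the entire parenthesized correction reduces to $1+o(1)$ uniformly, which matches the form of \eqref{masterineq} exactly. Applying Theorem~\ref{masterthm} with parameter $\phi\theta$ (and the multiset $A_n$ of scaled component sizes $l_i/n$) then yields convergence of $(L_{1n},\dots,L_{kn})$ to the $k$-dimensional marginal of PD($\phi\theta$).

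The main obstacle, such as it is, has already been absorbed into the preparatory machinery: the heavy lifting was establishing Lemma~\ref{assemblemma} by combining the Arratia--Tavaré formula \eqref{assembmom} with the Flajolet--Soria asymptotics \eqref{comp} and \eqref{phiobj}, the latter applied to $F(z)=\exp(\phi M(z))=Q(z,\phi)$ to recover the $(\phi\theta)$-dependent prefactor. With those ingredients in place, no further combinatorial or analytic input is required; the present theorem is essentially a bookkeeping assembly of Lemma~\ref{assemblemma} and the Master Theorem, and would occupy only a few lines of prose.
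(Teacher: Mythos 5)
Your proposal matches the paper's proof: both invoke Lemma~\ref{assemblemma} to obtain the mixed-moment asymptotics, restrict to good sequences of $k$-tuples so that each $i_j \ge cn \to \infty$ converts the $k$-fold product of $1+O((\log i_j)^{-2})$ factors into $1+o(1)$ uniformly, and then apply Theorem~\ref{masterthm} with parameter $\phi\theta$. The only difference is that you spell out the uniformity verification in slightly more detail than the paper does, which is a mild improvement rather than a divergence.
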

\begin{proof}
Formula \eqref{assembineq} in Lemma \ref{assemblemma} looks ready to serve as 
formula \eqref{masterineq} in Theorem \ref{masterthm}, except for the $k$-fold product towards
the end of 
\eqref{assembineq}. However we are allowed to restrict attention, when applying that theorem, 
to good families of $k$-tuples of indices, i.e. with a 
uniform positive lower bound hypothesis 
on the ratios $i_1/n,\dots,i_k/n$. This converts the $k$-fold product to $(1 + o(1))$. 
Now apply the theorem.
\end{proof}   

\subsection{Multisets and Selections}
 Multisets and Selections are sufficiently alike that we can treat them simultaneously,
in parallel.

 A monic polynomial of degree $n$, over some finite field, may be unambiguously identified 
with the multiset consisting of its irreducible monic factors -- ``multiset'', because 
some factors could appear with multiplicities; 
and the degrees add up to $n$.  Or, if we are interested only in squarefree polynomials, then 
the irreducible factors form a set, with no repetition of elements; and the degrees
still add up to $n$. More generally,
suppose we are given some universe of ``irreducible'' objects having positive integer weights,
with exactly $m_i$  different kinds of irreducibles of weight $i$. Our two polynomial
examples are prototypical of the following two respective constructions. 
\begin{itemize}
\item A {\em combinatorial multiset of weight $n$} 
is a multisubset of our universe, with total weight $n$. Equivalently, the 
{\em integer} $n$ is partitioned into positive summands, and for each summand $i$ one of the $m_i$
possible summands of weight $i$ is selected, with replacement.
\item A {\em combinatorial selection of weight $n$} is a subset of our universe,
of total weight $n$. So all components of an object must be of distinct kind, 
though distinct components of the same weights are permitted.
\end{itemize}
So the selection construction could be viewed as a subclass of the multiset construction.  
Note that any additional structure associated with our universe, 
for instance the fact that a collection
of irreducible polynomials multiplies together to form another polynomial, 
need not be considered in discussion of counting formulas.

 For either construction, let 
$$
M(x) = \sum_{i \ge 1} m_i x^i
$$
be the ordinary generating function for the numbers of irreducibles of weight $i$.
Also, if $q(n,k)$ denotes the number of multisets of total weight $n$ containing 
$k$ irreducibles, 
including multiplicities, or if we let it denote the number of selections of 
total weight $n$ containing $k$ irreducibles,
then in either case, for any given positive $\phi$ write 
$$
q_{\phi}(n) = \sum_{k = 1} ^n q(n,k)\phi^k
$$  
and
$$
Q(x,\phi) = \sum_{n \ge 0} q_{\phi}(n)x^n.
$$
For $\phi =1$, in either case, the series $Q = Q(x,1)$ reduces to 
the ordinary generating function for the numbers of composite objects of weights $n$.

 The following two formulas connecting $Q(x,\phi)$ and $M(x)$ are well-known:
For multisets we have
\begin{equation}\label{multigen}
Q(x,\phi) = \prod_{i \ge 1}(1-\phi x^i)^{-m_i} = \exp \left(\sum_{j \ge 1} \phi^j M(x^j)/j \right), 
\end{equation}
and for selections we have
\begin{equation}\label{selectgen}
Q(x,\phi) = \prod_{i \ge 1}(1+\phi x^i)^{m_i} = \exp \left(\sum_{j \ge 1} (-1)^{j+1}\phi^j M(x^j)/j \right). 
\end{equation}
See, e.g.,~\cite[Section~9.2]{AT94} for~\ref{multigen} and Section~9.3 for \ref{selectgen}. 

 In either construction, given the set of all composite objects of
total weight $n$ constructed from some given universe of irreducibles, suppose one object 
is picked at random according to the tilted distribution 
with tilting parameter $\phi$. Let $C_1,\dots,C_n$ be the
numbers of irreducible components of that object, of weights $1$ through $n$ respectively, 
including any multiple occurrences.
(So again  $C_1 + 2C_2 + \cdots + nC_n = n$.)

 For any sequence of positive indices $i_1, \dots,i_k$ with $i_1 + \cdots + i_k \le n$ 
we have a formula for the corresponding mixed moment.
For the multiset construction it is
\begin{equation}\label{multimom}
E\{C_{i_1}\cdots C_{i_k}\} = 
\frac{m_{i_1}\cdots m_{i_k}}{q_{\phi}(n)}\sum_{h_1,\dots,h_k \ge 1} \phi^{h_1 + \cdots + h_k} q_{\phi}(n-h_1i_1 - \cdots - h_ki_k),
\end{equation}
and for the selection construction it is
\begin{equation}\label{selectmom}
E\{C_{i_1}\cdots C_{i_k}\} = 
\frac{m_{i_1}\cdots m_{i_k}}{q_{\phi}(n)}\sum_{h_1,\dots,h_k \ge 1} (-1)^{h_1 + \cdots + h_k+k}\phi^{h_1 + \cdots + h_k} q_{\phi}(n-h_1i_1 - \cdots - h_ki_k).
\end{equation}
(See formulas (139) and (146) in~\cite[Sections~9.2 and~9.3]{AT94}, respectively.  
While the authors give explicit formulas only for the individual
falling factorial moments, their method of proof easily yields the present 
formulas as well.) 
Note that because of the expressions $q_{\phi}(n-h_1i_1 - \cdots - h_ki_k)$, the sums have finitely
many terms. In our application to Theorem \ref{mulselPD} below only the leading term in each 
case, where $h_1 = \cdots = h_k =1$, will matter asymptotically.

 We can marry the Flajolet-Soria asymptotics to the moment formulas \eqref{multimom} and 
\eqref{selectmom}:
\begin{lemma}\label{multlemma}  
Suppose that conditions \textbf{FS 1, FS 2,} and \textbf{FS 3} are satisfied for $G(z) = M(z)$,
the ordinary generating function of the $m_i$'s, and that for the multiset construction we impose
$\phi < \rho^{-1}$. For the selection construction we allow $\phi$ to be arbitrarily large. 
Then in either case we have
we have
\begin{equation}\label{multineq}
E\{C_{i_1}\cdots C_{i_k} \} =  
\frac{(\phi \theta)^k}{(1-m/n)^{1-\phi \theta}}\frac{1}{i_1 i_2\cdots i_k}
\left(1+O(\lfloor n/i_1 \rfloor\cdots\lfloor n/i_k \rfloor \rho^{\min(i_1,\dots,i_k)} )\right),
\end{equation}
where $m = i_1 + \cdots + i_k $.
\end{lemma}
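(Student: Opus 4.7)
The plan is to substitute the Flajolet--Soria asymptotics from the previous section into the exact moment identities \eqref{multimom} and \eqref{selectmom}, isolate the leading contribution coming from $(h_1,\dots,h_k)=(1,\dots,1)$, and treat the rest as error. First I would apply \eqref{comp} with $G = M$ to obtain
$$m_{i_j} = \frac{\theta}{i_j}\rho^{-i_j}\left(1 + O\left((\log i_j)^{-2}\right)\right).$$
For $q_\phi(N)$ itself, \eqref{multigen} exhibits the multiset generating function in the form $\exp(\phi M(z) + R(z))$ with $R$ as in \eqref{mulR}, so \eqref{mulobj} applies under the hypothesis $\phi < \rho^{-1}$; for selections \eqref{selectgen} matches \eqref{selR} and \eqref{selobj} applies with no restriction on $\phi$. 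In either case I get
$$q_\phi(N) = \frac{C e^{\phi\lambda}}{\Gamma(\phi\theta)} N^{\phi\theta - 1}\rho^{-N}\left(1 + O\left((\log N)^{-2}\right)\right)$$
uniformly for $N \to \infty$, with the same constant $C$ appearing in $q_\phi(n)$ and in $q_\phi(n - s)$ for any $s$ with $n - s \to \infty$.

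Next I would evaluate the $(1,\dots,1)$ term of \eqref{multimom} or \eqref{selectmom}, namely
$$\frac{m_{i_1}\cdots m_{i_k}\,\phi^k q_\phi(n-m)}{q_\phi(n)}.$$
The prefactors $C e^{\phi\lambda}/\Gamma(\phi\theta)$ cancel between $q_\phi(n-m)$ and $q_\phi(n)$; the powers of $\rho$ telescope as $\rho^{-i_1}\cdots\rho^{-i_k}\cdot\rho^{-(n-m)}/\rho^{-n}=1$ since $m = i_1+\cdots+i_k$; and the polynomial factors combine to $(n-m)^{\phi\theta-1}/n^{\phi\theta-1} = (1-m/n)^{\phi\theta - 1}$. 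This recovers exactly the main term
$$\frac{(\phi\theta)^k}{(1-m/n)^{1-\phi\theta}}\frac{1}{i_1\cdots i_k}$$
up to a multiplicative $1 + O((\log n)^{-2})$, which is absorbed into the stated error.

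The real work is bounding the contribution of higher-order tuples, those with some $h_j \ge 2$ and $\sum_j h_j i_j \le n$. For each such admissible tuple, comparing its contribution with that of the leading term via the $q_\phi$ asymptotic gives a ratio of order $\prod_j (\phi \rho^{i_j})^{h_j - 1}$ times bounded polynomial factors in $n$; in the multiset case $\phi \rho < 1$ keeps the $\phi^{h_j - 1}$ controlled, while in the selection case the signs in \eqref{selectmom} only help once we pass to absolute values. The number of admissible non-leading tuples is at most $\prod_j \lfloor n / i_j\rfloor$, and every one of them carries at least one factor $\rho^{i_{j_0}}$ for some index $j_0$ with $h_{j_0} \ge 2$, hence at least $\rho^{\min(i_1,\dots,i_k)}$; summing yields the stated relative error $O\bigl(\prod_j\lfloor n/i_j\rfloor\,\rho^{\min(i_1,\dots,i_k)}\bigr)$. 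The main obstacle I foresee is retaining uniform control on $q_\phi(n - s)$ as $s$ approaches $n$, where the Flajolet--Soria asymptotic degrades; for those boundary tuples one falls back on the crude upper bound $q_\phi(N) \le C' N^{\phi\theta - 1}\rho^{-N}$ implied by \eqref{mulobj} or \eqref{selobj}, which is still sufficient once the $\rho^s$ factor is in hand.
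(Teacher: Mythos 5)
Your proposal follows the same route the paper takes, namely substituting the Flajolet--Soria asymptotics \eqref{comp}, \eqref{mulobj}, \eqref{selobj} into the exact mixed-moment identities \eqref{multimom}, \eqref{selectmom}, extracting the $(h_1,\dots,h_k)=(1,\dots,1)$ term as the main contribution, and bounding the rest; the paper simply declares this substitution "straightforward," and you have filled in the telescoping of the $\rho$-powers and prefactors, the treatment of the higher tuples, and the boundary issue as $n-\sum h_j i_j$ gets small, all correctly in spirit. One small inaccuracy: the multiplicative $1+O((\log n)^{-2})$ and $1+O((\log i_j)^{-2})$ factors coming from \eqref{comp} and \eqref{mulobj}/\eqref{selobj} are \emph{not} absorbed by the stated error $O(\lfloor n/i_1\rfloor\cdots\lfloor n/i_k\rfloor\,\rho^{\min i_j})$ (for good sequences the latter is exponentially small while the former is only polylogarithmic), so the error factor in \eqref{multineq} really ought to carry an extra $(\log)^{-2}$ contribution, as the corresponding assembly Lemma~\ref{assemblemma} indeed does; this is an imprecision already present in the paper's lemma statement and is harmless for the downstream application (both error terms vanish for good sequences), but your claim of "absorption" is the one place your write-up is off.
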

\begin{proof}
Note that in the present cases the radius of convergence $\rho$ of $G(z) = M(z)$
must satisfy $\rho < 1$, for the trivial reason that otherwise the coefficients of $G(z)$ 
as given in \eqref{comp} could not yield integers, for large enough $n$.

 That being so, substitute \eqref{comp} and either \eqref{mulobj} or \eqref{selobj} into 
\eqref{multimom} or \eqref{selectmom} respectively.  It is then straightforward to get 
\eqref{multineq}.
\end{proof}

 We can now give the Poisson--Dirichlet limit theorem for multisets and selections.  
We suppose we are given a
universe of irreducibles with ordinary generating function $M(x)$ for the numbers $m_i$ of different
kinds of weight $i$, for $i\ge 1$.  Let $\rho <1$ be the radius of convergence of $M$.
\begin{theorem}\label{mulselPD}
Let $l_1 \ge l_2 \ge \cdots $ be the irreducible component sizes of a 
random multiset or a random selection of weight $n$, 
chosen from a tilted distribution with parameter $\phi$, where for multisets we suppose that 
$\phi < 1/\rho$. Define $L_{jn} = l_j/n$, where the latter
sequence is padded out with zeros.  Suppose the Flajolet-Soria conditions 
\textbf{FS 1, FS 2,} and \textbf{FS 3} are satisfied when $G(z) = M(z)$. Then for each $k>0$,
the joint distribution of $L_{1n},\dots,L_{kn}$ converges to the initial $k$-dimensional
joint PD($\phi \theta$) distribution.
\end{theorem}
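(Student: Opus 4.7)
The plan is to derive Theorem \ref{mulselPD} as a direct corollary of the Master Theorem \ref{masterthm}, using Lemma \ref{multlemma} to supply the mixed-moment hypothesis \eqref{masterineq}, with the Master's parameter $\theta$ replaced by $\phi\theta$. Structurally this will mirror the assembly proof just given.

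The formula \eqref{multineq} matches \eqref{masterineq} exactly once we read the Master's $\theta$ as $\phi\theta$; the only discrepancy is the error factor $1 + O(\lfloor n/i_1\rfloor \cdots \lfloor n/i_k\rfloor\, \rho^{\min(i_1,\ldots,i_k)})$, which I must show collapses to $1 + o(1)$ uniformly over any good sequence of families of $k$-tuples arising in the Master Theorem's application.

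So I fix such a good sequence. By definition there exists some $c > 0$ with $i_j/n \ge c$ for every index in every tuple in the sequence. Then each $\lfloor n/i_j\rfloor \le 1/c$, so the product of floors is at most $c^{-k}$, while $\min(i_1,\ldots,i_k) \ge cn$. Since $\rho < 1$ (as observed at the start of the proof of Lemma \ref{multlemma}, where this was forced by the integrality of the $m_i$), the factor $\rho^{\min(i_1,\ldots,i_k)} \le \rho^{cn}$ decays geometrically in $n$. Hence the error term is $O(c^{-k}\rho^{cn}) = o(1)$ uniformly over the family, so \eqref{multineq} reduces to \eqref{masterineq} with parameter $\phi\theta$, and applying the Master Theorem yields convergence to PD($\phi\theta$).

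All the substantive work has been done upstream, in Lemma \ref{multlemma} and in the Flajolet--Soria estimates \eqref{mulobj} and \eqref{selobj} feeding it; the present step is essentially mechanical, so there is no genuine obstacle. If anything, this reduction is slightly cleaner than the assembly case, because the geometric decay from $\rho < 1$ overwhelms the constant-order bound on the floor factors over a good family, without any need to appeal separately to a $\log$-type error estimate.
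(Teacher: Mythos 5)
Your proof is correct and follows exactly the same route as the paper's: both appeal to Theorem \ref{masterthm} with parameter $\phi\theta$, feed it Lemma \ref{multlemma}, and use the good-sequence uniform lower bound $i_j/n \ge c$ together with $\rho < 1$ to collapse the error factor to $1 + o(1)$. You merely spell out the bookkeeping ($\lfloor n/i_j\rfloor \le 1/c$, $\min i_j \ge cn$, hence the error is $O(c^{-k}\rho^{cn})$) that the paper leaves implicit.
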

\begin{proof}
We appeal once again to Theorem \ref{masterthm}. Restricting to good sequences of families of
$k$-tuples,
together with the fact that $\rho <1$,  converts the
multiplicative error term in \eqref{multineq} to $1 + o(1)$, as required in \eqref{masterineq}.
So the theorem applies.
\end{proof}

\noindent\textit{Remark}. The necessity for the restriction to $\phi < \rho^{-1}$ for multisets 
may appear to be an artifact
of our complex analytic methodology, but the same restriction is also imposed with the 
methodology of
\cite{ABT}.  In fact, as far as we are aware, the limiting behavior for $\phi\ge \rho^{-1}$
is unknown, for multisets.

\bibliography{P-D}
\bibliographystyle{plain}

\end{document}